\documentclass[12pt,reqno]{amsart}
\usepackage{amssymb, amsmath,amsthm}
\usepackage{color}
\newtheorem{thm}{Theorem}

\numberwithin{defn}{section}
\numberwithin{thm}{section}
\numberwithin{Lemma}{section}
\numberwithin{Corollary}{section}
\numberwithin{Example}{section}
\numberwithin{subsection}{section}
\numberwithin{Remark}{section}
\numberwithin{equation}{section}
\numberwithin{ppn}{section}
\begin{document}
\title
[On Extending the Applicability of two-Step Secant Method...]
{On Extending the Applicability of two-Step Secant Method for non-differentiable operators} 
\author{Neha Gupta, J. P. Jaiswal}
\date{}
\maketitle

\textbf{Abstract.} 
The semi-local convergence analysis of a well defined and efficient two-step Secant method in Banach spaces is presented in this study. The recurrence relation technique is used under some weak assumptions. The pertinency of the assumed method is extended for nonlinear non-differentiable operators. The convergence theorem is also established to show the existence and uniqueness of the approximate solution. A numerical illustration is quoted to certify the theoretical part which shows that the earlier study will fail if the function is non-differentiable.
\\\\
\textbf{Mathematics Subject Classification (2000).} 47H17, 65J15
\\\\
\textbf{Keywords and Phrases.} Semi-local convergence, two-step Secant method, non-differentiable operator, recurrence relations.


\section{\bf Introduction}
In diverse areas of science and engineering, there is an ample number of problems which can be seen in the form of
\begin{eqnarray}\label{eqn:11}
\pounds(a)=0,
\end{eqnarray}
which is mandatory to solve. Here, $\pounds:\Delta\subseteq A\rightarrow B$ be a continuous operator but non-differentiable.
Often, the solution of the equation $(\ref{eqn:11})$ cannot be found in the closed form. In this case, the iterative method is adopted to get the approximate solution.

An illustrious iterative method namely, Newton's method cannot be applied to solve the equation $(\ref{eqn:11})$ as the operator $\pounds$ is not differentiable
and hence in this situation, the Secant method can be chosen. There is plethora study of higher order method as it plays an important role where quick convergence is required like, applications where the stiff system of equations is involved. Moreover, many authors have studied the convergence analysis of various types of single-step iterations, multi-step iterations for $(\ref{eqn:11})$. In this manner, a well-known two-step King-Werner-type method having order $1+\sqrt{2}$ has been studied in the ref. [\cite{Werner}-\cite{Argyros}]. Initially, Werner in [\cite{Werner}-\cite{Werner1}] studied a method proposed by King in the article \cite{King} which is defined by:\\
Given $a_0,b_0\in \Delta$, Let
\begin{eqnarray}\label{eqn:12a}
a_{k+1}&=&a_k-\pounds'\bigg(\frac{a_k+b_k}{2}\bigg)^{-1}\pounds(a_k),\nonumber\\
b_{k+1}&=&a_{k+1}-\pounds'\bigg(\frac{a_k+b_k}{2}\bigg)^{-1}\pounds(a_{k+1}),
\end{eqnarray}
for each $k=0,1,2,\cdots$ and $\Delta\subseteq \mathbb{R}^n$
 In this continuation, McDougall et al. in \cite{Dougall} had studied a two-step method defined by:\\
For $a_0\in\Delta$,
\begin{eqnarray}\label{eqn:13}
b_0&=&a_0,\nonumber\\
a_1&=&a_0-\pounds'\bigg(\frac{a_0+b_0}{2}\bigg)^{-1}\pounds(a_0),\nonumber\\
b_k&=&a_k-\pounds'\bigg(\frac{a_{k-1}+b_{k-1}}{2}\bigg)^{-1}\pounds(a_k),\nonumber\\
a_{k+1}&=&a_k-\pounds'\bigg(\frac{a_k+b_k}{2}\bigg)^{-1}\pounds(a_k),
\end{eqnarray}
for each $k=1,2,\cdots$ and $\Delta\subseteq\mathbb{R}$
 On analyzing the equations $(\ref{eqn:13})$ and $(\ref{eqn:12a})$, one can notice that the method $(\ref{eqn:13})$ is simply the King-Werner-type method on using repeated initial points.
Method $(\ref{eqn:13})$ was also shown to be of order $1+\sqrt{2}$ in the ref. \cite{Dougall}. The convergence analysis can be categorized as local and semi-local which uses the details given at the solution and at the initial point, respectively. Here, we study the semi-local convergence analysis of the two-step Secant method which is more generalized and derivative-free. So, for $k=0,1,2,\cdots$ as
\begin{eqnarray}\label{eqn:12}
a_{k+1} &=& a_k-\Upsilon_k^{-1}\pounds(a_k),\nonumber\\
b_{k+1} &=& a_{k+1}-\Upsilon_k^{-1}\pounds(a_{k+1}),
\end{eqnarray}
where $a_0$ and $b_0$ are initial points, $\Upsilon_k=[a_k,b_k;\pounds]$. Here, $[a,b;\pounds]$ is a notation for a divided difference having order one for operator $\pounds$ which satisfies $[a,b;\pounds](a-b)=\pounds(a)-\pounds(b)$ for each $a,b\in \Delta$ with $a\not= b$. 
The local and semi-local convergence of the method $(\ref{eqn:13})$ has been established under various continuity conditions by using majorizing techniques which can be seen in the ref. [\cite{Ren1}-\cite{Lin}].

The interest in introducing the method $(\ref{eqn:12})$ is: the order of convergence of the method is similar to the method $(\ref{eqn:12a})$, the method $(\ref{eqn:12})$ is an appropriate substitute for the method $(\ref{eqn:12a})$, calculating $\pounds'(a)$ may be very expensive and hence the method $(\ref{eqn:12a})$ will be of no use. Hence, for all the above-mentioned statements, the aptness of the method $(\ref{eqn:12a})$ is extended through method $(\ref{eqn:12})$ and under weaker assumptions. 

In this article, we have two goals. First to assume a multi-parametric family of iterative methods which is derivative free. Next one is to get a semi-local convergence result for the nonlinear non-differentiable operators. Therefore, the following conditions are to be assumed:
\begin{flalign*}
&(A1) \|a_0-b_0\|\le s\ for\ a_0,b_0\in \Delta, &\\
&(A2)\|\Upsilon_0^{-1}\|\le\beta\ where\  \Upsilon_0^{-1}=[a_0,b_0;\pounds]^{-1},&\\
&(A3)\|\Upsilon_0^{-1}\pounds(a_0)\|\le\eta,&\\
&(A4)\|[a,b;\pounds]-[u,v;\pounds]\|\le\omega(\|a-u\|,\|b-v\|)\ \forall a,b,u,v\in \Delta,&
\end{flalign*}
where $s>0, \beta>0, \eta>0,\ \omega:\mathbb{R}_+\rightarrow\mathbb{R}_+$ is a continuous and non-decreasing function in its both arguments.
In the next section, we will corroborate the convergence theorem of the method $(\ref{eqn:12})$ for non-differentiable operators under weak continuity conditions.								



\section{\bf Semi-local Convergence Analysis of the Method $(\ref{eqn:12})$}
Given $a\in A$ and $\rho>0$, $R(a_0,\rho)$ will designate as an open ball around $a$ with radius $\rho$ and $\overline{R(a_0,\rho)}$ its closure. 

\begin{thm}\label{thm:21}
Let $\pounds:\Delta\subseteq A\rightarrow B$ be a nonlinear operator defined on a nonempty open convex domain $\Delta$ with two Banach spaces $A$ and $B$.
We assume that the conditions $(A1)-(A4)$ are satisfied and the following equation holds: 
\begin{equation}\label{eqn:21}
\mu\bigg(1-\frac{m}{1-\beta\omega(\mu,s+3\mu)}\bigg)-\eta=0,
\end{equation} 
where $m=max\{\beta\omega(\eta,s),(\beta\omega(\eta,\eta))\}$. The above equation has at least one positive root say, $\rho$ which is the smallest positive root of $(\ref{eqn:21})$. If $\beta\omega(\rho,s+3\rho)<1$, $W=\frac{m}{1-\beta\omega(\rho,s+3\rho)}<1$ and $\overline{R(a_0,\rho)}\subset\Delta$, then the sequence $\{a_k\}$ and $\{b_k\}$ produced by two-step Secant method $(\ref{eqn:12})$ converges to a unique solution $a^*$ of $\pounds(a)=0$. Moreover, the solution $a^*$ belongs to $R(a_0,\rho)$ and unique in $\overline{R(a_0,\rho)}$.
\end{thm}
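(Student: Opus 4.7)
The plan is to prove convergence via the standard recurrence-relation machinery and then pass to the limit. At each stage I would carry forward three inductive assertions: (i) $\Upsilon_k^{-1}=[a_k,b_k;\pounds]^{-1}$ exists with $\|\Upsilon_k^{-1}\|\le\beta/\bigl(1-\beta\omega(\|a_k-a_0\|,\|b_k-b_0\|)\bigr)$; (ii) the iterates stay in $\overline{R(a_0,\rho)}$, in particular $\|a_k-a_0\|\le\rho$ and $\|b_k-b_0\|\le s+3\rho$; and (iii) a geometric decay of consecutive displacements, namely $\|a_{k+1}-a_k\|\le W\,\|a_k-a_{k-1}\|$ and $\|b_{k+1}-a_{k+1}\|\le W\,\|a_{k+1}-a_k\|$ (or a comparable one-step contraction). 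The base case $k=0$ is immediate from (A1)--(A3).

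For the inductive step the workhorse is the divided-difference identity
\begin{equation*}
\pounds(a_{k+1})=\pounds(a_{k+1})-\pounds(a_k)-\Upsilon_k(a_{k+1}-a_k)=\bigl([a_{k+1},a_k;\pounds]-[a_k,b_k;\pounds]\bigr)(a_{k+1}-a_k),
\end{equation*}
and, analogously for the $b$-step,
\begin{equation*}
\pounds(b_{k+1})=\bigl([b_{k+1},a_{k+1};\pounds]-\Upsilon_k\bigr)(b_{k+1}-a_{k+1}).
\end{equation*}
Applying (A4) to these differences, then left-multiplying by $\Upsilon_k^{-1}$, produces the residual bounds that feed back into the next step via the method's defining relations. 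The bound on $\Upsilon_{k+1}^{-1}$ is obtained through the Banach perturbation lemma applied to $\Upsilon_{k+1}=\Upsilon_0+(\Upsilon_{k+1}-\Upsilon_0)$, together with (A4) and the triangle-inequality estimate $\|b_{k+1}-b_0\|\le\|b_{k+1}-a_{k+1}\|+\|a_{k+1}-a_0\|+\|a_0-b_0\|$, which is exactly where the argument $s+3\rho$ in the denominator of $W$ enters. The quantity $m=\max\{\beta\omega(\eta,s),\beta\omega(\eta,\eta)\}$ controls the first step and hence appears in the numerator of $W$.

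Once the induction closes, summing the geometric estimates gives $\|a_{k+p}-a_k\|\le W^k\eta/(1-W)$, so $\{a_k\}$ (and likewise $\{b_k\}$, since $\|b_k-a_k\|\to 0$) is Cauchy. The defining scalar equation (2.1) is precisely the equality $\eta/(1-W)=\rho$ at $\mu=\rho$, which guarantees the limit lies in $\overline{R(a_0,\rho)}$. Passing to the limit in the residual bound on $\Upsilon_k^{-1}\pounds(a_k)$ yields $\pounds(a^*)=0$. For uniqueness, suppose $a^{**}\in\overline{R(a_0,\rho)}$ also solves $\pounds(a)=0$; then $0=\pounds(a^*)-\pounds(a^{**})=[a^*,a^{**};\pounds](a^*-a^{**})$, and another application of (A4) plus the Banach lemma to $[a^*,a^{**};\pounds]-\Upsilon_0$ gives invertibility, forcing $a^*=a^{**}$.

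The main obstacle I expect is the bookkeeping in the inductive step: one has to confirm simultaneously that $\beta\omega(\|a_k-a_0\|,\|b_k-b_0\|)<1$ at every stage, that the summed displacements do not exceed $\rho$ (so that the hypothesis $\beta\omega(\rho,s+3\rho)<1$ can be invoked uniformly), and that the one-step contraction constant is genuinely $W<1$. This is a tight calibration: the $3\rho$ term is not a priori obvious, and verifying it requires carefully summing $\|b_j-a_j\|$ along with $\|a_j-a_0\|$. Everything else—invertibility via Banach, residual estimates via the divided-difference identity, Cauchy via geometric series, uniqueness via another Banach application—is routine once the induction is set up correctly.
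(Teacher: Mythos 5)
Your proposal is correct and follows essentially the same route as the paper: induction on the recurrence relations, the divided-difference identity $\pounds(a_{k+1})=([a_{k+1},a_k;\pounds]-\Upsilon_k)(a_{k+1}-a_k)$ together with (A4) for the residuals, the Banach perturbation lemma against $\Upsilon_0$ (which is exactly where the $s+3\rho$ argument enters, just as you identify), a geometric-series Cauchy argument with ratio $W$, the reading of equation (2.1) as $\eta/(1-W)=\rho$ to keep the iterates in the ball, and uniqueness via the Banach lemma applied to $[a^*,b^*;\pounds]-\Upsilon_0$. The only cosmetic difference is your extra identity for $\pounds(b_{k+1})$, which the paper bypasses by bounding $\|b_{k+1}-a_{k+1}\|\le\|\Upsilon_k^{-1}\pounds(a_{k+1})\|$ directly from the second sub-step.
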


\begin{proof}
Initially, by the virtue of mathematical induction we prove that the iterative sequence given in $(\ref{eqn:12})$ is well defined, that is, the iterative procedure is justifiable if the operator $\Upsilon_k$ is invertible and the point $a_{k+1},\ b_{k+1}$ lies in $\Delta$ at each step. From the initial hypotheses, it seems that $a_1$ is well defined and,
\begin{flalign*}
&\|a_1-a_0\|\le\|\Upsilon_0^{-1}\pounds(a_0)\|\le\eta\le \rho.
\end{flalign*}
Clearly, $a_1\in R(a_0,\rho)$. After that, we observe
\begin{eqnarray*}
\pounds(a_1)&=&\pounds(a_1)-\pounds(a_0)+\pounds(a_0)\\
&=&([a_1,a_0;\pounds]-[a_0,b_0;\pounds])(a_1-a_0)
\end{eqnarray*}
and
\begin{eqnarray*}
\|\pounds(a_1)\|&\le&\omega(\|a_1-a_0\|,\|a_0-b_0\|)\|a_1-a_0\|\\
 &\le&\omega(\rho,s+\rho)\eta.
\end{eqnarray*}
Consequently, we obtain
\begin{eqnarray*}
\|b_1-a_0\|&\le& \|a_1-a_0\|+\|\Upsilon_0^{-1}\pounds(a_1)\|\\
&\le&\eta+\beta\omega(\rho,s+\rho)\eta< \rho.
\end{eqnarray*}
Therefore $b_1\in R(a_0,\rho)$. From the second sub-step of the method $(\ref{eqn:12})$, we can get
\begin{eqnarray*}
\|b_1-a_1\| &\le& \beta\omega(\rho,s+\rho)\eta<\rho.
\end{eqnarray*}
Furthermore, we will show that $\Upsilon_1^{-1}$ exists and for this we have
\begin{eqnarray*}
\|I-\Upsilon_0^{-1}\Upsilon_1\| &\le& \|\Upsilon_0^{-1}\|\|\Upsilon_0-\Upsilon_1\|\\
 &\le&\beta\|[a_0,b_0;\pounds]-[a_1,b_1;\pounds]\|\\
&\le& \beta\omega(\rho,s+3\rho)<1.
\end{eqnarray*}
Hence, by using Banach Lemma, it follows that the operator $\Upsilon_1^{-1}$ exists and
\begin{eqnarray*}
\|\Upsilon_1^{-1}\|\le\frac{\beta}{1-\beta\omega(\rho,s+3\rho)}.
\end{eqnarray*}
Again, the approximation $a_2$ is well defined and
\begin{eqnarray*}
\|a_2-a_1\|&\le&\|\Upsilon_1^{-1}\pounds(a_1)\|\le \|\Upsilon_1^{-1}\|\|\pounds(a_1)\|\le W\eta\le\eta,\\
\|\pounds(a_2)\|&\le&\|[a_2,a_1;\pounds]-[a_1,b_1;\pounds]\|\|a_2-a_1\|\le\omega(\eta,\eta)\eta,\\
\|b_2-a_2\|&\le& \frac{\beta}{1-\beta\omega(\rho,s+3\rho)}\times \omega(\eta,\eta)W\eta<\rho.
\end{eqnarray*}
If we now suppose that $\Upsilon_j=[a_j,b_j;\pounds]$ is invertible and $b_{j+1},\ a_{j+1}\in R(a_0,\rho)\subseteq\Delta\ \forall\ j=1,2,3,\cdots,k-1$, then
\begin{eqnarray*}
&1)& \exists \  \Upsilon_j^{-1}=[a_j,b_j;\pounds]^{-1} \ such \ that \ \|\Upsilon_j^{-1}\|\le\frac{\beta}{1-\beta\omega(\rho,s+3\rho)},\\
&2)&\|a_{j+1}-a_j\|\le W\|a_j-a_{j-1}\|\le W^j \|a_1-a_0\|\le \eta,\\
&3)& \pounds(a_{j+1})\le \omega(\eta,\eta)\|a_{j+1}-a_j\|,\\
&4)&\|b_{j+1}-a_{j+1}\|\le W^j\eta.
\end{eqnarray*}
By induction hypotheses, we obtain
\begin{eqnarray*}
\|I-\Upsilon_0^{-1}\Upsilon_k\|&\le& \|\Upsilon_0^{-1}\|\|\Upsilon_0-\Upsilon_k\|\\
&\le&\|\Upsilon_0^{-1}\|\|[a_0,b_0;\pounds]-[a_k,b_k;\pounds]\|\\
&\le& \beta\omega(\rho,s+3\rho)<1.
\end{eqnarray*}
So by Banach lemma,
\begin{flalign*}
\|\Upsilon_k^{-1}\|\le\frac{\beta}{1-\beta\omega(\rho,s+3\rho)},
\end{flalign*}
Thus, we have
\begin{eqnarray*}
\|a_{k+1}-a_k\|&\le&\|\Upsilon_k^{-1}\|\|\pounds(a_k)\|\le W^k\eta<\eta.\\
\|a_{k+1}-a_0\|&\le& \|a_{k+1}-a_k\|+\|a_k-a_{k-1}\|+\cdots+\|a_1-a_0\|\\
&\le&(W^k+W^{k-1}+\cdots+1)\|a_1-a_0\|\\
&\le&\frac{1-W^{k+1}}{1-W}\|a_1-a_0\|<\rho.
\end{eqnarray*}
So, $a_{k+1}\in R(a_0,\rho)$. Subsequently,
\begin{eqnarray*}
\|\pounds(a_{k+1})\|&\le&\|[a_{k+1},a_k;\pounds]-[a_k,b_k;\pounds]\|\|a_{k+1}-a_k\|\\
&\le& \omega(\|a_{k+1}-a_k\|,\|a_k-b_k\|)\|a_{k+1}-a_k\|\\
&\le&\omega(\eta,\eta)\|a_{k+1}-a_k\|
\end{eqnarray*}
\begin{flalign*}
&\Rightarrow\|b_{k+1}-a_{k+1}\|\le \|\Upsilon_k^{-1}\pounds(a_{k+1})\|\le W^{k+1}\eta.&
\end{flalign*}
Besides this, we will show that $b_{k+1}\in R(a_0,\rho)$.
\begin{eqnarray*}
\|b_{k+1}-a_0\|&\le& \|b_{k+1}-a_{k+1}\|+\|a_{k+1}-a_k\|+\cdots+\|a_1-a_0\|\\
&\le&(W^{k+1}+W^k+W^{k-1}+\cdots+1)\|a_1-a_0\|\\
&\le&\frac{1-W^{k+2}}{1-W}\|a_1-a_0\|<\rho
\end{eqnarray*}
$\Rightarrow b_{k+1}\in R(a_0,\rho)$. Hence, the mathematical induction is true for all $j=1,2,3\cdots n$. Eventually we will show that the sequence $\{b_k\}$ is a Cauchy sequence. For this, let $p\ge 1$,
\begin{eqnarray*}
\|b_{k+p}-b_k\|&\le& \|b_{k+p}-a_{k+p}\|+\|a_{k+p}-a_{k+p-1}\|+\cdots+\|a_{k+1}-a_k\|\\
&\le& (W^p+W^{p-1}+W^{p-2}+\cdots+1)\|a_{k+1}-a_k\|\\
&\le& \frac{1-W^p}{1-W}W^k\|a_1-a_0\|\le\frac{W^k\eta}{1-W}.
\end{eqnarray*}
Since, $W<1$, $\{b_k\}$ is a Cauchy sequence. In a similar manner, we can show that $\{a_k\}$ is a Cauchy sequence. Thus, the sequence $\{a_k\}$ and $\{b_k\}$ are convergent and converges to $a^*\in R(a_0,\rho)$.\\
To claim uniqueness of the solution, let $\exists$ another solution $b^*$ of $\pounds(a)=0$ in $R(a_0,\rho)$ such that $\pounds(b^*)=0.$ Consider the operator, $T=[a^*,b^*;\pounds]$ and if $T$ is invertible then $a^*=b^*$. Now let,
\begin{eqnarray*}
\|\Upsilon_0^{-1}T-I\|&\le& \|T-\Upsilon_0\|\\
&\le& \|\Upsilon_0^{-1}\|\|[a^*,b^*;\pounds]-[a_0,b_0;\pounds]\|\\
&\le& \beta\omega(\|a^*-a_0\|,\|b^*-b_0\|)\\
&\le& \beta\omega (\rho,s+3\rho)<1.
\end{eqnarray*}
Hence, the operator $T^{-1}$ exists by Banach lemma and $a^*=b^*.$
\end{proof}



\section{\bf Numerical Example}
\textbf {Example 3.1}\cite{Ren}
Let $A=B=\Delta=\mathbb{R}^2$.
Consider an operator $\pounds=(\pounds_1,\pounds_2)$ on $\Delta$ by
\begin{eqnarray}
\pounds_1(a_1,a_2)&=&a_1^2-a_2+1+\frac{1}{9}|a_1-1|,\nonumber\\
\pounds_2(a_1,a_2)&=&a_2^2+a_1-7+\frac{1}{9}|a_2|,\nonumber
\end{eqnarray}
where $a=(a_1,a_2)\in \mathbb{R}^2$ and we use infinity norm here.
For $v,w\in\mathbb{R}^2$, we take $[v,w;\pounds]\in L(A,B)$ as\\
\begin{eqnarray*}
[v,w;\pounds]_{i1}=\frac{\pounds_i(v_1,w_2)-\pounds_i(w_1,w_2)}{v_1-w_1} \ , [v,w;\pounds]_{i2}=\frac{\pounds_i(v_1,v_2)-\pounds_i(v_1,w_2)}{v_2-w_2}.
\end{eqnarray*}
Therefore,
\begin{eqnarray*}
[v,w;\pounds]=
\begin{pmatrix}
\frac{v_1^2-w_1^2}{v_1-w_1}&&-1\\
1&& \frac{v_2^2-w_2^2}{v_2-w_2}
\end{pmatrix}
+\frac{1}{9}
\begin{pmatrix}
\frac{|v_1-1|-|w_1-1|}{v_1-w_1}&&0\\
0&&\frac{|v_2|-|w_2|}{v_2-w_2}
\end{pmatrix},
\end{eqnarray*}
and,
\begin{eqnarray*}
\|[a,b;\pounds]-[v,w;\pounds]\|\le \|a-v\|+\|b-w\|+\frac{2}{9}.
\end{eqnarray*}
So, we can take $\omega(a,b)=a+b+\frac{2}{9}$. Clearly, here the conditions assumed in \cite{Abhimanyu} fails as the function is non-differentiable. Here,
we choose $a_0=(1.06,2.40)\ , b_0=(1.14,2.54)$. For Theorem $(\ref{eqn:21})$, we can obtain the following parameters:\\
$a_1\approx (1.1607,2.3629),\ b_1\approx(1.1593,2.3619),\ \beta\approx 0.4775,\ s\approx 0.14,\\
\rho\approx 0.1997, \eta\approx 0.1007$ and $m\approx0.2210$. In this case, the solution of equation $(\ref{eqn:21})$ are satisfied which confirms that the unique solution of $\pounds(a)=0$ exists in $\overline{R(a_0,\rho)}$. As a solution of the equation $(\ref{eqn:11})$ we acquire the vector $a^*\approx(1.159361,2.361824)$ after second iterations.


\section{Conclusion}
In this work, we scrutinize the semi-local convergence result of the two-step Secant method when applied for non-differentiable operators. In this idea, basically we have extended the results of Kumar et al. \cite{Abhimanyu} where the author has considered the applicability of the method for differentiable case only. Hence, it is noteworthy that we have extended the implications of the two-step Secant method for non-differentiable operators. A concrete example is also considered to sustain the theory.


\section{Acknowledgment}
This work is supported by Science and Engineering Research Board (SERB), New Delhi, India under the scheme Start-up-Grant (Young Scientists) (Ref. No. YSS/2015/001507).
                  

Neha Gupta\\
Department of Mathematics\\
Maulana Azad National Institute of Technology\\
 Bhopal, M.P. India-462003.\\
Email: neha.gupta.mh@gmail.com.\\\\
Jai Prakash Jaiswal\\
Department of Mathematics\\
Maulana Azad National Institute of Technology \\
Bhopal, M.P. India-462003. \\
Email: asstprofjpmanit@gmail.com.

\end{document}